\tikzstyle{line} = [draw, thick, -latex']
\tikzstyle{bigArrow} = [thick, decoration={markings,mark=at position
\tikzstyle{innerWhite} = [semithick, white,line width=1.4pt, shorten >= 4.5pt]
\definecolor{mycolor}{RGB}{0,160,0}
\def\real{\hbox{\rm\setbox1=\hbox{I}\copy1\kern-.45\wd1 R}}
\def\natural{\hbox{\rm\setbox1=\hbox{I}\copy1\kern-.45\wd1 N}}
\newcommand{\be}{\begin{equation}}
\newcommand{\en}{\end{equation}}
\newtheorem{theorem}{Theorem}[section] 
\newtheorem{lemma}[theorem]{Lemma}     
\newtheorem{proposition}[theorem]{Proposition}
\theoremstyle{definition}
\newtheorem{definition}[theorem]{Definition}
\newtheorem*{ack*}{Acknowledgment}
\theoremstyle{remark}
\theoremstyle{case}
\numberwithin{equation}{section}
\title[Symbolic Rank One]{Constructive Symbolic Presentations of {Rank One} Measure-Preserving Systems} 
\author{Terrence Adams}
\address{U.S. Government,
9800 Savage Rd,
Ft. Meade, MD 20755 {USA}}
\email{terry@ganita.org}
\author{S\'ebastien Ferenczi}
\address{IMPA, CNRS UMI 2924,
Estrada Dona Castorina 110, Rio de Janeiro RJ 22460-320, Brasil}
\email{ssferenczi@gmail.com}
\author{Karl Petersen}
\address{Department of Mathematics,
CB 3250 Phillips Hall,
University of North Carolina,
Chapel Hill, NC 27599 USA}
\email{petersen@math.unc.edu}
\date{\today}
\begin{document}
	
	\subjclass[2010]{37A05, 37B10 (primary)}
	\keywords{Rank one, odometer, constructive symbolic, isomorphism}

\begin{abstract}
	Given a rank one measure-preserving system defined by cutting and stacking with spacers, 
we produce a rank one binary sequence such that its orbit closure under the shift transformation, 
with its unique {nonatomic} invariant probability, is isomorphic to  
the given system. In 
particular, the classical dyadic odometer is presented in terms of a recursive sequence of blocks on the two-symbol alphabet $\{0,1\}$. 
The construction is accomplished using a definition of rank one in the setting of 
adic, or Bratteli-Vershik, systems.
\end{abstract}

\maketitle

\section{Introduction}
 {\em Rank one} measure-preserving systems were introduced in \cite{chacon}, and got their present name in \cite{orw}; 
 they have been extensively studied, including recent developments such as {\cite{ForemanWeiss2016,ForemanWeiss2015,gaohill,Danilenko2015,Danilenko2016}}. The survey 
 \cite{Ferenczi1997} tried to collate the 
 various definitions of rank one, but it has two shortcomings: first, it ignores
 the theory of {\em adic systems} (see for example \cite{verliv}, and, for background, \cite{Durand2010}), 
 though rank one systems have a particularly simple and useful adic presentation 
 (which, to our knowledge, has never appeared in print). Then, the nicest (and most used 
 in recent papers) definition of rank one, called the constructive symbolic definition in the survey, was 
 not known at the time to be equivalent to the others, as the {\em odometers}, which are rank one systems
 by all the other definitions, do not satisfy it (see the discussion in Section 3 below).

The present note aims to fill both of these gaps,
by giving one more definition of rank one systems, the {\em adic definition}, and then by using
it to show that any rank one system is measure-theoretically isomorphic to  a rank one system with an extra  
property we call {\em essential  expansiveness}, which in turn implies that it satisfies 
the constructive symbolic definition.

{Rank one systems, as described in Definition \ref{dgc} below, were modified,  
	in a different context, by the second author in \cite[Theorem 6.3]{fhz3}  and \cite[Theorem 4.8]{fz2}, 
	by replacing, at suitably rare steps $n$, a relatively small number of  $n$-towers
	by stacks of spacers of the same height,
	and this technique may well have been known before. 
	In 2013 the use of this kind of construction to give a
	symbolic constructive model in the particular case of odometers was discussed by the second author with Aaron Hill, but
	nothing was written. 
	In \cite[Remark 2.10]{Danilenko2015} it is stated that each rank one system is isomorphic to one with $a_{n,q_n-1}>0$ for infinitely many $n$. Indeed,
	combining the proof of \cite[Theorem 2.8]{Danilenko2015} with \cite{delJunco1998} and  \cite[Appendix]{Kalikow1984}, one may produce an isomorphism with a constructive symbolic rank one system.
	Here, we show explicitly how to build for each rank one $\mathbb Z$-action an isomorphic system that is essentially expansive, i.e., satisfies the constructive symbolic definition of rank one.  Our new system will satisfy the hypothesis of Proposition \ref{pea}, a stronger condition than the one in \cite[Remark 2.10]{Danilenko2015}.}

{
\begin{ack*}
	This project was advanced through conversations between TA and KP at the April 2016 University of Maryland-Penn State Workshop on Dynamical Systems and Related Topics and between SF and KP at the May 2016 
	Cieplice Conference on New Developments Around the x2 x3 Conjecture and Other Classical Problems in Ergodic Theory.
	We thank the organizers and supporting institutions of those meetings for providing these opportunities for collaboration.  
	{We also thank A. Hill for conversations on this topic and A. Danilenko for bringing our attention to \cite{Danilenko2015} and quoting our main result, with a new proof, in \cite{Danilenko2016}.}
\end{ack*}
}

\section{Geometric and symbolic definitions}
Henceforth we take as definition of a rank one system the constructive geometric
definition of \cite{Ferenczi1997}; note that the property of being rank one is invariant under
measure-theoretic isomorphism; indeed, it is equivalent to building our system  on the unit 
interval  by  cutting and stacking 
in a certain way, and allowing such isomorphisms. By a {\em measure-preserving system} we mean a nonatomic Lebesgue probability space together with an invertible measure-preserving transformation.

\begin{definition}\label{dgc} A measure-preserving system $(X, \mathcal A, T, \mu )$ is {\em rank one} if there exist sequences of
positive integers $q_n, n\in {\mathbb N}\cup \{0\}$, with $q_n>1$ for infinitely many $n$,  and $a_{n,i}, n \in {\mathbb N}\cup \{0\}, 
0\leq i \leq q_n-1$, such that, if $h_n$ and $s_n$ are defined by 
\be\label{eq:h} h_0=1,
h_{n+1}=q_nh_n+\sum_{j=0}^{q_n-1}a_{n,i},
\en 
then 
\be\label{eq:sumcond}
\sum_{n=0}^{\infty}{{h_{n+1}-q_nh_n}\over{h_{n+1}}} =\sum_{n=0}^{\infty}{{\sum_{i=0}^{q_n-1}a_{n,i}}\over{h_{n+1}}} <\infty
\quad\text{ (equivalently} \sum_{n=0}^\infty \frac{\sum_{i=0}^{q_n-1}a_{n,i}}{q_nh_n} < \infty ),
\en
and subsets of $X$, denoted by $F_n, n \in {\mathbb N}\cup \{0\}$, $F_{n,i}, n
\in\mathbb{N}, 0 \leq i \leq q_n-1$, and  $S_{n,i,j}, n \in\mathbb{N}, 0 \leq i
\leq q_n-1, 0 \leq j \leq a_{n,i}-1$, (if $a_{n,i}=0$
there are no $S_{n,i,j}$), such that for all $n$ \begin{itemize} \item
$(F_{n,i}, 0 \leq i \leq q_n-1)$ is a partition of $F_n$, \item the
$T^kF_n, 0 \leq k \leq h_n-1$, are disjoint, \item $
T^{h_n}F_{n,i}=S_{n,i,1}$ if $a_{n,i} \neq 0$, \item
$T^{h_n}F_{n,i}=F_{n,i+1}$ if $a_{n,i}=0$ and $i<q_n-1$, \item
$TS_{n,i,j}=S_{n,i,j+1}$ if $j<a_{n,i}-1$, \item
$TS_{n,i,a_{n,i}-1}=F_{n,i+1}$ if $i<q_n-1$, \item $F_{n+1}=F_{n,0}$,
\end{itemize} and the partitions $P_n=\{F_n, TF_n, ..., T^{h_n-1}F_n,
X\setminus\cup_{k=0}^{h_n-1}T^kF_n\}$ are increasing to $\mathcal{A}$.
\end{definition}

The parameters $q_n$ and $a_{n,i}$ determine the transformation $T$ up to 
a set of measure $0$, but of course for a given $T$ several
sets of parameters are possible (this is the point of the whole paper).
The sets $T^kF_n, 0 \leq k \leq h_n-1,$ are called the {\em levels} of
the $n$-{\em towers} (or stacks). The sets $S_{n,i,j},  0 \leq i
\leq q_n-1, 0 \leq j \leq a_{n,i}-1$ are the $n$-{\em spacer} levels.

Note that if we had $q_n=1$ ultimately,
then because of the condition on $h_n$ we should have also $a_{n,0}=0$ for all $n$
large enough, and then $T$ would not be defined almost everywhere. 

This construction determines naturally an associated constructive symbolic definition.

\begin{definition}\label{dsc}
A system is {\em constructive symbolic rank one} if it consists of 
the shift on the orbit closure of the sequence $u$ defined as follows, with its unique
nonatomic invariant measure. 
	For each $n=0,1,\dots$ the initial $n$-block of $u$ is $B_n$. 
	The blocks $B_n$ are defined 
 recursively by concatenation: 
\be 
B_0=0,
B_{n+1}=B_n 1^{a_{n,0}}B_n \dots B_n 1^{a_{n,q_n-1}} \text{ for } n \geq 0.
\en 
The parameters $a_{n,i}$ are as in Definition \ref{dgc} and are assumed to satisfy (\ref{eq:h}) and (\ref{eq:sumcond}).
\end{definition}

The symbols $1$, corresponding to spacer levels, are called spacers.

\section{Odometers}
The  classical dyadic odometer, also called the von Neumann-Kakutani 
adding machine, can be defined by  the constructive geometric definition as a rank one system with $q_n=2$ for all $n$, and  $a_{n,i}=0$ for all $n$ and $i$; but then the blocks $B_n$ on the alphabet $\{0,1\}$ defined recursively by $B_0=0, B_{n+1} =B_nB_n$ lead to  a trivial one-point symbolic dynamical system, while the phase space of the dyadic odometer is the unit interval. Thus this set of parameters $q_n$ and $a_{n,i}$ do not provide a symbolic constructive definition of the  odometer; in \cite[p. 15]{Ferenczi1997} and \cite[p. 124]{Fogg} it is stated that the dyadic odometer is not known to be rank one by the constructive symbolic definition.  \\

Note that there exists a symbolic model for the dyadic odometer: namely, the shift on the orbit closure of the fixed point $u$ of the substitution $\sigma$ defined by $0 \to 01, 1 \to 00$ (a Toeplitz sequence, called the period-doubling sequence and constructed by Garcia and Hedlund \cite{GarciaHedlund1948}) is measure-theoretically isomorphic to the dyadic odometer. As an aside, we show we cannot use it to find a  constructive symbolic definition for this rank one system:

\begin{proposition}
	\label{rem:toeplitz}
 No sequence $x$ in the orbit closure of $u$  can be built by blocks $B_n$ as in Definition \ref{dsc}. \end{proposition}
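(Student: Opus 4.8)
The plan is to pit two rigidity features of the period-doubling subshift against the self-similar shape that Definition~\ref{dsc} forces on any admissible $x$. First I would record the local restrictions imposed by the substitution $\sigma\colon 0\mapsto 01,\ 1\mapsto 00$. Both images begin with $0$ and the only $1$ produced sits in the second slot of $\sigma(0)$, so $u$ never contains $11$; since $11$ is absent, consecutive $0$'s of $u$ are separated by at most one $1$, and applying $\sigma$ to the two resulting patterns $00$ and $010$ shows that every maximal run of $0$'s in $u$ has length $1$ or $3$, whence $0^{4}$ and $1001$ are not factors of $u$. Because $\mathcal L(x)\subseteq\mathcal L(u)$ for every $x$ in the orbit closure, these prohibitions hold for any candidate $x$. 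I would also use that $u$, as a fixed point of a primitive substitution, is linearly recurrent and hence has finite critical exponent: there is a $K$ with $W^{K}\notin\mathcal L(u)$ for every nonempty word $W$, so no eventually periodic one-sided sequence can have its language inside $\mathcal L(u)$.

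Now suppose some $x$ in the orbit closure is built from blocks $B_n$ as in Definition~\ref{dsc}. From $11\notin\mathcal L(x)$ every $a_{n,i}$ lies in $\{0,1\}$, and from $0^{4}\notin\mathcal L(x)$ the block $B_n$ cannot be a power of $0$ for large $n$ (otherwise $0^{h_n}$ with $h_n\to\infty$ would occur), so eventually each $B_n$ contains a $1$; then its leading run $0^{f_n}$ and, when $B_n$ ends in $0$, its trailing run $0^{\ell_n}$ are each flanked by a $1$ inside $B_n$. I would then read the internal junctions of $B_{n+1}=B_n1^{a_{n,0}}B_n\cdots B_n1^{a_{n,q_n-1}}$ at the infinitely many levels with $q_n\ge 2$. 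If $B_n$ ends in $1$, a spacer $1$ would create $11$, so all spacers at that level vanish and $B_{n+1}=B_n^{q_n}$. If $B_n$ ends in $0$, a spacer $1$ between two copies exposes the maximal runs $0^{\ell_n}$ and $0^{f_n}$, forcing $\ell_n,f_n\in\{1,3\}$, whereas a spacer $0$ fuses them into the maximal run $0^{\ell_n+f_n}$, forcing $\ell_n+f_n=3$; since $\{1,3\}+\{1,3\}=\{2,4,6\}$ misses $3$, these options are mutually exclusive, so the internal spacers at each level are either all $0$ or all $1$.

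To finish I would pass to the run-length recoding $\rho$ that replaces each maximal block $0^{e}$ by the symbol $e$. A spacer $1$ preserves the $0$-runs and so acts on $\rho(B_n)$ by plain concatenation, while a spacer $0$ merely fuses two boundary runs into a single $3$; in either regime $\rho(B_{n+1})$ equals $\rho(B_n)^{q_n}$ up to a bounded modification at its $q_n-1$ internal seams. Iterating, $\rho(x)=\lim_n\rho(B_n)$ is a limit of (fused) powers of one fixed word, hence eventually periodic; and since consecutive $0$-runs of $x$ are separated by exactly one $1$, the sequence $x$ is reconstructed from $\rho(x)$ and is itself eventually periodic. This contradicts the finite critical exponent of $u$.

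The technical heart, and the step most likely to need care, is making this last passage airtight. One must dispose of the degenerate levels $q_n=1$ (which either do nothing or append a single $1$, leaving $\rho$ unchanged), track the terminal spacer $a_{n,q_n-1}$---the only thing that can switch the final symbol of $B_n$ between the two regimes---and verify that each regime is absorbing once $B_n$ contains a $1$: a value $2$ among $\{\ell_n,f_n\}$ in the all-$0$ regime blocks every later $1$-spacer, while $\ell_n,f_n\in\{1,3\}$ in the all-$1$ regime blocks every later $0$-spacer. The run-length recoding is precisely the device that turns this bookkeeping into the single clean statement that $\rho(x)$ is eventually periodic.
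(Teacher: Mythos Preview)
Your approach is correct but takes a genuinely different route from the paper's. The paper exploits the coarser structural fact that every point of the subshift is a concatenation of the two length-$4$ blocks $0100$ and $0101$, so occurrences of $0100$ lie only at positions $\equiv 0 \pmod 4$. Picking a single block $B=B_N$ long enough to contain $0100$ (and ending in $0$), the decomposition $x=B\,1^{s_0}B\,1^{s_1}\cdots$ forces all gaps $|B|+s_i$ to be congruent modulo $4$; since $11$ is forbidden this pins every $s_i$ to the same value in $\{0,1\}$, and $x$ is periodic outright. This is a one-level argument: no induction on $n$ is needed.

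Your argument instead uses the finer run-length constraint ($0$-runs have length $1$ or $3$) to show that at each level the internal spacers are uniformly $0$ or uniformly $1$, and then that the chosen regime is absorbing, which again gives periodicity. This is sound, and your recognition of the terminal spacer $a_{n,q_n-1}$ and of the degenerate levels $q_n=1$ shows you see where the bookkeeping lies. Two comments: first, the run-length recoding $\rho$ is unnecessary---once absorption is established you have $x=B_N^{\infty}$ (all-$0$ regime) or $x=(B_N 1)^{\infty}$ (all-$1$ regime, terminal spacer $0$) directly, so $x$ is periodic rather than merely eventually periodic, and minimality of the subshift (no periodic points) suffices in place of the critical-exponent appeal. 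Second, your method in principle adapts to other substitution subshifts whose $0$-run spectrum satisfies an analogous arithmetic obstruction, whereas the paper's mod-$4$ trick is tailored to the period-doubling sequence. The trade-off is length: the paper's proof is a short paragraph, while yours requires an induction across levels.
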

\begin{proof}	Any sequence $x$ in the orbit closure of $u$ is the concatenation of the blocks $0100$ and $0101$. Therefore spaces between appearances of $0100$ in $x$ are multiples of $4$ (there is no appearance of $0100$ across a juncture of $0100$ and $0101$).
	Suppose $x= B \, 1^{s_0}\, B \, 1^{s_1} \, B\, \dots$, where $B$ is some block in
	a supposed
	 constructive symbolic rank one construction of $x$ that is long enough to contain $0100$, and $B$ starts and ends with the symbol $0$. 
	The $s_i$ are bounded because the sequence has minimal orbit closure. The sequence $x$ does not contain the block $11$, so each $s_i$ is $0$ or $1$. If both $0$ and $1$ appear among the $s_i$, then we can find in $x$ a stretch where the spaces between appearances of $B$, and hence between appearances of $0100$, are not all multiples of $4$. 
	Therefore either  $s_i=0$ for all $i$ or  $s_i=1$ for all $i$, and in both cases $x$ is periodic.\end{proof}

The general odometers are rank one systems with $a_{n,i}=0$ for all $n$ and $i$.

\section{Adic definition}\label{sad}
We give now a definition of rank one which was not known at the time of  \cite{Ferenczi1997}: it  is   a translation of the constructive geometric definition into the language of adic transformations defined on paths in 
Bratteli diagrams. 

\begin{definition}\label{da} A system is {\em rank one} if it is measure-theoretically isomorphic to the  following system, where the $q_n$ and $a_{n,i}$ are as in Definition \ref{dgc}
	and satisfy the sum condition (\ref{eq:sumcond}).
	
{L}et $V = \{ v_{-1}, v_{0,0}, v_{0,1},\ldots,  v_{n,0}, v_{n,1}, \ldots \}$ 
represent an infinite set of nodes (or vertices).   
The integer $n$ is called the {\em level} of  the vertex $v_{n,i}, i=0,1$. 
For each $n=-1,0,1,\dots$ there are directed edges from level $n$ to level $n+1$.
Let $e_{-1,0}$ be an edge connecting $v_{-1}$ to $v_{0,0}$, 
and $e_{-1,1}$ be an edge connecting $v_{-1}$ to $v_{0,1}$.  
For $n\in \natural \cup \{ 0 \}$, let $e_{n, i}$ for $0\leq i < q_n$ 
be edges connecting $v_{n,0}$ to $v_{n+1,0}$.  
Let $e_{n,i,j}$ for $0\leq i < q_n$ and $0\leq j <a_{n,i}$ 
be edges connecting $v_{n,1}$ to $v_{n+1,0}$.  
Let $d_n$ be a single edge connecting $v_{n,1}$ to $v_{n+1,1}$. 
Order the edges entering $v_{n+1,0}$ as 
\[
\ldots \prec e_{n,i} \prec e_{n,i,0} \prec e_{n,i,1} \prec \ldots \prec e_{n,i,a_{n,i}-1} \prec e_{n,i+1} \prec \ldots . 
\]
Let $E$ be the set of all edges, and let $G = \{ V, E \}$ be the infinite directed acyclic graph 
with the ordering defined above.  Let $X_0$ be the set of infinite paths beginning 
at the root node $v_{-1}$. 
The ordering of edges arriving at the nodes  defines a  lexicographic ordering of infinite paths that start at $u_{-1}$: we say that $x<y$ if 
there is an $N$ such that
$x_n=y_n$ for all $n>N$ and $x_{N}<y_{N}$.
	If $X_{\max}$ denotes the set of maximal paths, then 
the  adic transformation $T$ on $X=X_0 \setminus X_{\max}$ is defined to be the map that assigns to each path its successor in this ordering. 

A {\em cylinder set} is determined by a finite set $E_0$ of edges forming a path from the root vertex $v_{-1}$ to a vertex $v$: it is the set of all the infinite paths in $X$ that pass through all the edges in $E_0$.
Let $\mathcal{A}$ be the $\sigma$-algebra generated by all cylinder sets.  
  There is a unique probability measure that for each $v \in V$ assigns the same mass 
to all cylinder sets determined by paths from $v_{-1}$ to $v$.  
The adic transformation $T$ preserves this measure. 
Denote by $P_k'$ the partition of $X$ into the cylinder sets determined by paths from $v_{-1}$ to vertices at level $k$. 

\end{definition}

We show how to translate the constructive geometric definition into the adic definition: for  almost all $x$ in the geometric system, either there is a unique $M=M(x)\geq 0$ such that $x$ is in $S_{M,i,j}$, for 
some  $0 \leq i=i(x)
\leq q_n-1, 0 \leq j=j(x) \leq a_{n,i}-1$, or else $x$ is in $F_0$, in which case we take $M(x)=-1$. Then, for all $n>M$, $x$ is in  some $T^kF_{n,i}$ for some  $0 \leq i=i(n,x)
\leq q_n-1$, which means that $x$ 
is in the $i$-th (from the bottom) copy of the $n$-tower inside the $n+1$-tower.

 Then in the adic system $x$ becomes the path (still denoted by $x$)
\begin{eqnarray}
 \label{eq:ad} \qquad
\left\{ \begin{array}{ll}
(e_{-1,1},d_0,\ldots,d_{M-1},e_{M,i(x),j(x)}
e_{M+1,i(M+1,x)}, e_{M+2,i(M+2,x)},\ldots)&\mbox{
if }M>0,\\
(e_{-1,1},e_{0,i(x),j(x)},e_{1,i(1,x)}, e_{2,i(2,x)},\ldots)&\mbox{ if }
M=0,\\
 (e_{-1,0},e_{0,i(0,x)},e_{1,i(1,x)}, e_{2,i(2,x)},\ldots)&\mbox{  if }M=-1.\\
\end{array} \right. \end{eqnarray}

On the other hand, in the adic system, for each $n \in \natural \cup \{ 0 \}$, define the  cylinder set 
\be
C_{n} = \{ x : x(m) = e_{m,0,0}, m< n \} . 
\en 
 Then the  $h_n$ of Definition \ref{dgc} is the number of paths from the root node $v_{-1}$ 
to the node $v_{n, 0}$, often called the {\em dimension} of the vertex $v_{n, 0}$, and the  basis $F_n$ of the tower in Definition \ref{dgc} is the cylinder $C_n$. 
For $n=0$, 
the partition $P_0'$ is $\{C_0,D_1\}$, where $D_1=C_0^c$ is the set of
{\em spacer paths} that go through the vertex $u_{0,1}$, while $C_0$ is the set 
of {\em nonspacer paths} that do not.\\

What determines a point $x$ in a rank one system is the sequence $J_n$ of the numbers of the level where $x$ lies in the $n$-tower. The adic definition
allows us to compute them explicitely from the path $x$. 

\begin{lemma}\label{mad} Let $x$ be defined by (\ref{eq:ad}) with $M(x)=M$; then for all $n>M$, $x$ is in $T^{J_n(x)}C_n$ where
\begin{eqnarray}\quad\left\{\begin{array}{ll}
\mbox{ if }M=-1, &J_0(x)=0,\\
\mbox{ if }M\geq 0,&
 J_{M+1}(x)=(i(x)+1)h_{M}+a_{M,0}+\ldots+a_{M,i(x)-1}+j(x),\\
\mbox{ if }n\geq M+1,&J_{n+1}(x)=i(n,x)h_n+a_{n,0}+\ldots+a_{n,i(n,x)-1}+J_n(x). \end{array} \right.
\end{eqnarray}
\end{lemma}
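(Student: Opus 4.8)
The plan is to compute $J_n(x)$ as a count of paths in the ordered Bratteli diagram and to read off both formulas directly from the edge ordering of Definition~\ref{da}. Writing $x_{<n}$ for the initial segment of $x$ ending at its level-$n$ vertex (which is $v_{n,0}$ for every $n>M$), the first thing I would establish is the key identification
\[
J_n(x)\;=\;\#\{\text{paths } \gamma \text{ from } v_{-1} \text{ to } v_{n,0}:\ \gamma \prec x_{<n}\},
\]
where $\prec$ is the lexicographic order of Definition~\ref{da}. This holds because $C_n=F_n$ is exactly the set of minimal paths through $v_{n,0}$, the adic map $T$ is the successor, and the successor alters only edges at and below the lowest non-maximal level; hence, as long as $x_{<n}$ is not yet maximal, $T$ leaves all edges at levels $\ge n$ fixed and climbs the $n$-tower one level at a time, so that $J_n(x)$ counts exactly the initial segments preceding $x_{<n}$. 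I would also record two dimension counts: $v_{n,0}$ carries $h_n$ paths from the root (the dimension identity already noted after \eqref{eq:ad}), while $v_{n,1}$ carries exactly one, namely $(e_{-1,1},d_0,\dots,d_{n-1})$.

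With this dictionary the recursion is a direct count. For $n\ge M+1$ the level-$n$ edge of $x_{<n+1}$ is the nonspacer edge $e_{n,i(n,x)}$, and since $\prec$ is decided at the highest differing level, the paths to $v_{n+1,0}$ below $x_{<n+1}$ split into those whose top edge precedes $e_{n,i(n,x)}$ and those sharing that top edge but smaller below. Using the ordering $\ldots\prec e_{n,i}\prec e_{n,i,0}\prec\ldots\prec e_{n,i,a_{n,i}-1}\prec e_{n,i+1}\prec\ldots$, the first group consists of the nonspacer edges $e_{n,0},\dots,e_{n,i(n,x)-1}$, each contributing $h_n$ paths (from $v_{n,0}$), together with the spacer edges $e_{n,i',j}$ with $i'<i(n,x)$, each contributing one path (from $v_{n,1}$); the second group contributes $J_n(x)$ by the identification above. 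Summing gives $J_{n+1}(x)=i(n,x)h_n+(a_{n,0}+\dots+a_{n,i(n,x)-1})+J_n(x)$.

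For the initialization I would run the same count on the two boundary cases. When $M=-1$ the level-$0$ vertex is $v_{0,0}$, of dimension $h_0=1$, forcing $J_0(x)=0$. When $M\ge0$ the level-$M$ edge of $x_{<M+1}$ is the spacer edge $e_{M,i(x),j(x)}$ out of the one-dimensional vertex $v_{M,1}$, so no path shares this top edge; the preceding paths then come from the nonspacer edges $e_{M,0},\dots,e_{M,i(x)}$ --- now $i(x)+1$ of them, each contributing $h_M$ paths --- together with the spacer edges preceding $e_{M,i(x),j(x)}$, namely the $e_{M,i',j}$ with $i'<i(x)$ (contributing $a_{M,0}+\dots+a_{M,i(x)-1}$) and the $e_{M,i(x),j}$ with $j<j(x)$ (contributing $j(x)$). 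This yields $J_{M+1}(x)=(i(x)+1)h_M+(a_{M,0}+\dots+a_{M,i(x)-1})+j(x)$; the extra full copy $h_M$ reflects that a spacer sits above, rather than inside, copy $i(x)$ of the $M$-tower.

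The step I expect to be the main obstacle is the initial identification of $J_n(x)$ with the number of preceding initial segments: one must verify carefully that the global adic successor realizes the local ``move up one level'' inside the $n$-tower and never disturbs the edges at levels $\ge n$ before $x_{<n}$ becomes maximal, and that the order of Definition~\ref{da} is the one decided at the \emph{highest} differing edge. Once that is in place everything reduces to bookkeeping with the prescribed edge ordering and the two dimensions $\dim v_{n,0}=h_n$, $\dim v_{n,1}=1$; the only remaining subtlety is the asymmetry between the nonspacer top edge in the recursion (giving $i(n,x)h_n$) and the spacer top edge in the base case (giving $(i(x)+1)h_M$).
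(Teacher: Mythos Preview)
Your argument is correct and is essentially the approach the paper intends: the paper's proof is the single sentence ``This is a direct consequence of the construction of the towers in Definition~\ref{dgc} and its adic translation above,'' and your path-counting in the Bratteli diagram is precisely a careful unpacking of that ``direct consequence.'' Your identification $J_n(x)=\#\{\gamma\prec x_{<n}\}$, the dimension counts $\dim v_{n,0}=h_n$, $\dim v_{n,1}=1$, and the splitting of predecessors by their top edge recover exactly the geometric picture of the $(n+1)$-tower as $q_n$ copies of the $n$-tower separated by spacer columns, so there is no genuine difference in method, only in level of detail.
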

\begin{proof} This 
is a direct consequence of the construction of the towers in Definition \ref{dgc} and its adic translation above.
\end{proof}

  \section{Essential expansiveness}
We look now at cases where the constructive symbolic definition of rank one is equivalent to the other definitions. For this, we use a notion of expansiveness which comes from the general theory of adic
and topological dynamical systems, see for example \cite{dm2008}.

\begin{definition} For $k \in {\mathbb N}\cup \{0\}$, a rank one system defined
by Definition \ref{dgc}, resp. Definition \ref{da},  is {\em essentially $k$-expansive} if  the partition $P_k$ of $X$,
	resp. $P_k'$,
	  generates the full $\sigma$-algebra  under the  transformation $T$.   
	  \end{definition}

When a rank one system defined by Definition \ref{da} is essentially $0$-expansive, the symbolic system 
that results from coding the orbits of its paths according to their initial edges and taking the closure 
 coincides with the symbolic system 
defined in Definition \ref{dsc}. With the same parameters in each case, it is measure-theoretically isomorphic to both 
 the adic system $(X,T)$ of Definition \ref{da} and a measure-preserving system satisfying
 Definition \ref{dgc}. \\
 
 A necessary and sufficient condition for essential $0$-expansiveness of rank one systems, due to Kalikow 
 \cite[Appendix]{Kalikow1984}, is that the sequence $u$ in  Definition \ref{dsc} is not 
 periodic. {This in turn is realized for example when the parameters in 
  Definition \ref{dgc}  or \ref{da} satisfy
\be\label{kalspa}
\sup_{0\leq m\leq n, 0\leq i\leq q_{n+1}-1} a_{m,q_m-1}+a_{m+1,q_{m+1}-1}+\ldots a_{n,q_n-1}+a_{n+1,i}=\infty,
\en
  as this implies $u$ has unbounded strings of $1$, while it has symbols 
$0$ infinitely often because $q_n$ is at least $2$ for infinitely many $n$.} \\

Here, in order to make the present paper more self-contained, we introduce a stronger sufficient condition,
allowing an elementary proof of essential expansiveness.

\begin{proposition}\label{pea} 
	A rank one system defined by Definition \ref{dgc}  or \ref{da}
is essentially $0$-expansive if
$a_{n,q_n-1}>a_{n,i}$ for all $n$ and $i<q_n-1$.
\end{proposition}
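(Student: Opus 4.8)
The plan is to reduce the statement to the aperiodicity of the sequence $u$ of Definition \ref{dsc}, which by the criterion of Kalikow recalled just above is equivalent to essential $0$-expansiveness, and then to establish aperiodicity by tracking the longest runs of consecutive $1$'s (spacers) in the blocks $B_n$. The hypothesis $a_{n,q_n-1}>a_{n,i}$ for all $i<q_n-1$ forces $a_{n,q_n-1}\geq 1$ for every $n$ and, more importantly, makes the \emph{final} spacer block $1^{a_{n,q_n-1}}$ strictly longer than every intermediate one; this is exactly what is needed to control where the longest runs occur.

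First I would prove, by induction on $n$, the following lemma: writing $t_n$ for the number of trailing $1$'s of $B_n$, one has $t_{n+1}=t_n+a_{n,q_n-1}$, so that $t_n=\sum_{k=0}^{n-1}a_{k,q_k-1}$, and this trailing run is the \emph{unique} longest run of $1$'s in $B_n$. The inductive step would use the concatenation $B_{n+1}=B_n 1^{a_{n,0}}B_n\cdots B_n 1^{a_{n,q_n-1}}$. Since every $B_n$ begins with $0$ (as $B_0=0$), the runs of $1$'s in $B_{n+1}$ are of two kinds: those lying strictly inside a single copy of $B_n$, which have length $<t_n$ by the inductive hypothesis, and the $q_n$ ``junction'' runs of lengths $t_n+a_{n,i}$ (for $0\le i\le q_n-1$) formed where the trailing $1$'s of a copy of $B_n$ meet the following spacer $1^{a_{n,i}}$. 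The strict inequality $a_{n,q_n-1}>a_{n,i}$ makes the last junction, $t_n+a_{n,q_n-1}=t_{n+1}$, strictly longer than all the others, and it sits at the very end of $B_{n+1}$, closing the induction.

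The second step draws the two conclusions I need. Because each $a_{k,q_k-1}\geq 1$ we get $t_n\to\infty$, and since each $B_n$ is an initial block of $u$, the sequence $u$ contains runs of $1$'s of unbounded length. On the other hand $u$ contains the symbol $0$ arbitrarily far out: for each of the infinitely many $n$ with $q_n\geq 2$ the block $B_{n+1}$ contains a copy of $B_n$ beginning at a position $\geq h_n$, and that copy begins with $0$, so $0$'s occur beyond every $h_n\to\infty$. If $u$ had period $p$, then fixing one occurrence of $0$ would force $0$'s at all positions congruent to it modulo $p$, bounding every run of $1$'s by $p-1$ and contradicting their unboundedness. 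Hence $u$ is aperiodic, and the system is essentially $0$-expansive.

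The main obstacle is the domination bookkeeping in the inductive step: one must check that \emph{no} run of $1$'s in $B_{n+1}$ other than the trailing one can reach length $t_{n+1}$, which is precisely where the hypothesis is used, since without the strict inequality an intermediate junction $t_n+a_{n,i}$ could tie the final one and destroy the uniqueness of the longest run. (I note that for aperiodicity alone the weaker recursion $t_{n+1}\geq t_n+1$ already suffices.) I would also remark that the uniqueness of the longest run yields a recognizability argument locating the block boundaries directly from a $P_0'$-name, giving a self-contained proof that $P_0'$ generates without invoking \cite{Kalikow1984}; that route trades the appeal to Kalikow's criterion for a more delicate parsing argument.
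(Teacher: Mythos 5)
Your proof is correct, but it takes a genuinely different route from the paper's. You reduce the proposition to aperiodicity of the sequence $u$ and invoke Kalikow's criterion (which the paper states just before the proposition), then establish aperiodicity by an inductive run-length analysis: the trailing run of $1$'s in $B_n$ has length $t_n=\sum_{k<n}a_{k,q_k-1}$, it is the unique longest run in $B_n$, and $t_n\to\infty$. This is sound -- in effect you are verifying, in a sharpened form, the sufficient condition (\ref{kalspa}) already recorded in the paper -- with one small imprecision: the hypothesis forces $a_{n,q_n-1}\geq 1$ only for those $n$ with $q_n\geq 2$ (when $q_n=1$ the condition is vacuous, since there is no $i<q_n-1$); but as $q_n\geq 2$ for infinitely many $n$, the conclusion $t_n\to\infty$ survives. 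The paper's own proof is deliberately different: to keep the paper self-contained it avoids Kalikow entirely and argues directly in the adic framework, by induction showing that each cylinder $C_n$ lies in the smallest $T$-invariant $\sigma$-algebra $\mathcal D$ containing $C_0$ and $D_1$. Starting from $W_0=T^{h_n}C_n\cap D_1$ and iterating $W_{i+1}=TW_i\cap D_1$, the hypothesis guarantees that after $a_{n,q_n-1}$ steps only the points sitting above the last copy of the $n$-tower survive, so $W_{a_{n,q_n-1}}=T^{h_{n+1}-1}C_{n+1}$, whence $C_{n+1}\in\mathcal D$ and the induction closes. What each approach buys: the paper's induction is elementary, needs no external result, and shows directly how the strictly longest spacer block pins down the top level of the $(n+1)$-tower; your argument outsources the measure-theoretic step to Kalikow's appendix, which undercuts the paper's stated aim of self-containedness, but in exchange yields a finer combinatorial fact (uniqueness of the longest run, pointing toward recognizability of block boundaries) and would still apply under hypotheses weaker than the proposition's, such as (\ref{kalspa}).
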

\begin{proof}
Let  $\mathcal D$ the smallest $S$-invariant $\sigma$-algebra that contains $C_0$ and $D_1$: we shall 
show that it contains the cylinder set
$C_n$ of Section 4 for all $n$, thus all cylinder sets, 
and therefore equals the full $\sigma$-algebra of $Y$.

Indeed, $C_0$ is in $\mathcal D$. Suppose that $n \geq 0$ and
 $C_n$ is in $\mathcal D$.
 
Let $W_0 = T^{h_n}C_n \cap D_1$.  
For $i$ such that $0 \leq i < a_{n,q_n-1}$, define $W_{i+1} = TW_i \cap D_1$.  Then, by our hypothesis, 
 $W_{a_{n,q_n-1}}$ is exactly $T^{h_{n+1}-1}C_{n+1}$. 
 As all the $W_i$ are in $\mathcal D$, so
 is $C_{n+1}$, which completes our induction. 
\end{proof}

Note that our proposition gives a sufficient condition for essential expansiveness, but
it is not necessary: the so-called Chacon map given by Definition \ref{dsc} with the recursion 
rule $B_{n+1}=B_nB_n1B_n$ does not satisfy it, but, as $u$ is not periodic, the system
defined by Definition \ref{dgc} with the same parameters  is essentially expansive.

Note that if  $q_n=2$ and there is a single spacer, essential expansivity implies that 
$a_{n,q_n-1}=a_{n,1}>a_{n,0}$ 
at least for infinitely many $n$;
otherwise, 
 the recursion rule is  $B_{n+1}=B_n1B_n$ ultimately, and $u$ is periodic while
 the corresponding system in Definition \ref{dgc} is an odometer. 
Indeed, it is proved in \cite[Lemma 10]{ElAbdaloui2014} that if the sequence
$u$ defined as in Definition \ref{dsc} is periodic, then the corresponding rank one                                                                                                                                                                                                                                                                                                                                                                                                                                                                                                                                                                                                                                                                                                                                                                                                                                                                                                                                                                                                                                                                                                                                           
system defined as in Definition \ref{dgc} is isomorphic to an odometer.

\section{Essentially expansive construction}                   
{We turn now to our main {objective}, the construction of an essentially expansive model                      
(and thus of a constructive symbolic model) for every rank one transformation. 
 We use the adic definition, which is well adapted to the.building of isomorphisms.}

\begin{theorem} Every rank one system is measure-theoretically isomorphic to an essentially $0$-expansive rank one system.\end{theorem}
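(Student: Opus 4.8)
The plan is to take an arbitrary rank one system presented adically as in Definition \ref{da}, with parameters $q_n$ and $a_{n,i}$ satisfying (\ref{eq:h}) and (\ref{eq:sumcond}), and to produce from it a \emph{new} set of parameters $q_n'$, $a_{n,i}'$ defining a system that (a) is still rank one, i.e.\ still satisfies the summability condition (\ref{eq:sumcond}), (b) is measure-theoretically isomorphic to the original, and (c) satisfies a hypothesis guaranteeing essential $0$-expansiveness, most conveniently the one of Proposition \ref{pea} or at least the weaker condition (\ref{kalspa}) forcing $u$ to be aperiodic. The idea, following the technique attributed to the second author in \cite{fhz3,fz2}, is to leave the system untouched at all but a very sparse subsequence of levels $n_1 < n_2 < \cdots$, and at each such level to \emph{replace a small number of copies of the $n_k$-tower by a stack of pure spacers of the same height $h_{n_k}$}. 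In adic terms this amounts to rerouting a few of the edges $e_{n_k,i}$ (which enter $v_{n_k+1,0}$ along the nonspacer vertex $v_{n_k,0}$) so that they become spacer edges, thereby increasing some $a_{n_k,i}$ at the expense of $q_{n_k}$, while keeping the total height $h_{n_k+1}$ fixed.

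First I would set up the modification precisely at a single level $n=n_k$. Given the $n$-tower of height $h_n$ repeated $q_n$ times with spacers $a_{n,i}$ in between, I replace the final, say, $r_k$ copies of the $n$-tower by $r_k h_n$ additional spacers, so that the new parameters are $q_n' = q_n - r_k$ together with a suitably enlarged terminal spacer count $a_{n,q_n'-1}'$ that absorbs the $r_k h_n$ deleted tower levels plus the original trailing spacers. Crucially this leaves $h_{n+1}$ unchanged, so all heights $h_m$ for $m>n$ are unaffected and the modifications at different levels $n_k$ do not interfere. To keep (\ref{eq:sumcond}) I must choose the $n_k$ so sparse, and the $r_k$ so small relative to $h_{n_k}$, that the extra spacer mass $\sum_k r_k h_{n_k}/h_{n_k+1}$ is summable; since $h_{n_k+1}\geq q_{n_k}h_{n_k}$ I can for instance take $r_k=1$ and the $n_k$ along a subsequence where $q_{n_k}\geq 2$, giving an added contribution bounded by $\sum_k 1/q_{n_k}\leq\sum_k 2^{-k}$ after thinning. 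Simultaneously I arrange, by possibly taking $r_k$ large enough at the chosen levels or by directing the deleted copies into the terminal spacer block, that the modified parameters satisfy (\ref{kalspa}): the terminal spacer lengths $a_{n_k,q_{n_k}'-1}'$ grow without bound along $k$, so the coding sequence $u$ of the new system has unbounded runs of $1$'s and hence is aperiodic, whence Kalikow's criterion \cite[Appendix]{Kalikow1984} gives essential $0$-expansiveness. (If one prefers the clean hypothesis of Proposition \ref{pea}, one instead verifies $a_{n,q_n-1}'>a_{n,i}'$ for all $n$, which requires a bit more bookkeeping at the unmodified levels but is arranged by the same device.)

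The heart of the argument, and the step I expect to be the main obstacle, is constructing the measure-theoretic isomorphism $\phi$ between the original adic system and the modified one and proving it is a measure isomorphism. The natural map sends a path in the original diagram to ``the same'' path in the modified diagram, using the fact that away from the sparse levels $n_k$ the two diagrams are identical, and that the coordinates $J_n(x)$ computed in Lemma \ref{mad} determine the point. The subtlety is that on the set of paths that pass through one of the $r_k$ deleted tower-copies at level $n_k$ the naive identification breaks down, so $\phi$ must be defined as a genuine \emph{rearrangement} there; one shows that this exceptional set at level $n_k$ has measure comparable to $r_k h_{n_k}/h_{n_k+1}$, which by the summability arranged above satisfies $\sum_k(\text{measure of exceptional set at }n_k)<\infty$. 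Then by Borel--Cantelli almost every path lies in the exceptional set for only finitely many $k$, so for a.e.\ $x$ the map $\phi$ eventually agrees with the identity-on-coordinates identification from some level on; this makes $\phi$ well defined a.e., measurable, measure-preserving, and equivariant for the two adic transformations (the successor maps agree once one is past the last exceptional level, and Lemma \ref{mad} lets one track the $J_n$-coordinates explicitly through the finitely many rearrangement levels). Verifying equivariance at the rearrangement levels — that $\phi$ conjugates the original successor map to the modified one despite the rerouted edges — is where the careful accounting concentrates, and establishing that $\phi$ is invertible a.e.\ (by running the same Borel--Cantelli argument for the inverse identification) completes the proof that the modified essentially $0$-expansive system is isomorphic to the original.
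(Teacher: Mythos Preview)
Your overall strategy---replace some copies of the $n$-tower by pure spacers at selected levels, keep the heights $h_n$ fixed, control the added spacer mass, and build the isomorphism by identifying $J_n$-coordinates and invoking Borel--Cantelli on the exceptional sets---is exactly the paper's strategy, and your description of how the isomorphism and the equivariance argument will go is accurate.

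There is, however, a genuine gap in your summability step, and it shows up precisely in the motivating example of the dyadic odometer. You write that by taking $r_k=1$ and choosing the $n_k$ along a subsequence with $q_{n_k}\geq 2$ you get an added spacer contribution bounded by $\sum_k 1/q_{n_k}\leq\sum_k 2^{-k}$ ``after thinning''. But for the dyadic odometer $q_n=2$ and $a_{n,i}=0$ for every $n$, so $h_{n_k}/h_{n_k+1}=1/2$ for every choice of $n_k$, and no thinning of the subsequence $(n_k)$ will make $\sum_k h_{n_k}/h_{n_k+1}$ converge. More generally, whenever $h_n/h_{n+1}$ is bounded away from $0$ your modification at individual original levels cannot satisfy (\ref{eq:sumcond}) for the new system, and hence the new system is not rank one (it has infinite invariant measure).

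The paper repairs this by an additional preliminary step you have omitted: before doing any replacement it first \emph{collapses} stages along a sparse subsequence $m_0<m_1<\cdots$ chosen so that the telescoped heights $H_n=h_{m_n}$ satisfy $\sum_n H_n/H_{n+1}<\infty$, which is always possible since $h_n\to\infty$. The system is then rewritten with telescoped parameters $Q_n=\prod_{i=m_n}^{m_{n+1}-1}q_i$ and $A_{n,i}$, and the tower-to-spacer replacement is performed at \emph{every} collapsed level, working inside the construction of the $H_{n+1}$-tower from the $H_n$-tower. With this collapsing in place, the added spacer mass at level $n$ is at most $(\overline A_n+H_n)/H_{n+1}$, and summability follows from (\ref{eq:sumcon2}) together with the chosen condition $\sum H_n/H_{n+1}<\infty$. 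Once you insert this collapsing step, the rest of your outline (including the option of replacing a single sub-tower at each level and appealing to Kalikow via (\ref{kalspa}), which is the paper's ``Variant'' 6.4) goes through essentially as the paper does it.
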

The remainder of this section is devoted to the proof of the theorem.

\subsection{Definition of the new system} 
We start from the rank one system $(X,T)$ in Definition \ref{dgc} or \ref{da}. We choose a 
strictly increasing sequence $m_n$ with $m_0=0$ and such that if $H_n=h_{m_n}$ then
\be
\sum_{n=0}^{\infty} \frac{H_n}{H_{n+1}} < \infty. 
\en

We shall now look at the same system $(X,T)$, but we build it by considering only the steps $m_n$. Thus the parameters $q_n$ and $a_{n,i}$ are replaced by new parameters $Q_n$ and $A_{n,i}$, which we compute now.\\

Namely, we have 
\be Q_{n} = \prod_{i=m_n}^{m_{n+1}-1} q_i.\en 

We compute $A_{n,i}$ for $0\leq i\leq Q_n-1$. To do that, we write in a unique way
\be i=g_0+\sum_{j=1}^{m_{n+1}-m_n-1}g_j\prod_{k=0}^{j-1}q_{m_n+k}  \en
with $0\leq g_j \leq q_{m_n+j}-1$.
Then let $l$ be such that $g_k=q_{m_n+k}-1$ for all $k<l$ and 
$g_l\neq q_{m_n+l}-1$,  or $l=m_{n+1}-m_n-1$ if this last inequality is never satisfied; then we have
\be A_{n,i}=\sum_{k=0}^la_{m_n+k,g_k}.\en

Thus, the $H_n$ are indeed the heights of the stacks and the $A_{n,i}$ the numbers of spacers for the system $(X,T)$ 
after collapsing the stages between $m_n+1$ and $m_{n+1}-1$. 
As $(X,T)$ preserves a finite measure, the equivalent of condition (\ref{eq:sumcond}) still applies, thus

\be\label{eq:sumcon2}
\sum_{n=0}^\infty \frac{\sum_{i=0}^{Q_n-1}A_{n,i}}{H_{n+1}}<+\infty.
\en
We consider the rank one system $(X,T)$, as defined by the parameters $Q_n$ and $A_{n,i}$, with Definition \ref{da}; its edges are denoted as in that definition, by 
$d_n$, $e_{n,i}$ and $e_{n,i,j}$.\\

Let $\overline{A}_n = \max{ \{ A_{n,i} : 0\leq i\leq Q_n-1\} }$.  Choose $0\leq i_n\leq Q_n-1$ to be the last $i$ 
such that
$A_{n,i}+H_n+A_{n,i+1} +H_n+\ldots A_{n, Q_n-1}>\overline{A}_n$,  Namely

\be
\overline{H}_n = (Q_n-i_n-1)H_n +
\sum_{i\geq i_n}  A_{n,i} > \overline{A}_n, 
\en
but 
\be
  (Q_n-i_n-2)H_n+
\sum_{i>i_n} A_{n,i} \leq \overline{A}_n . 
\en

We define  now a new {rank one}  system $(Y,S)$ by modifying the system $(X,T)$ (now built with the towers at stages $m_n$)
in the following way: in the building of the tower at stage $m_{n+1}$ by  subsets of the tower at stage $m_n$ and spacers, we replace the upper
$(Q_n-i_n-1)$ of these sub-towers by spacers. The index $i_n$ has {been} chosen such that this change is the smallest one allowing the new system to satisfy Proposition \ref{pea}, and we shall show below that the change is small enough, first to allow the new system to preserve a finite measure, and then to allow it to be isomorphic to the initial system.

 We define now this system precisely, using the adic formalism.

  The set of vertices of our Bratteli diagram is 
$\{ u_{-1}, u_{i,0}, u_{i, 1} : i\in \natural \cup {0} \}$. 

Let $r_{-1,0}$ be a {directed} edge {from} $u_{-1}$ to $u_{0,0}$, and 
$r_{-1,1}$ {from} $u_{-1}$ to $u_{0,1}$. 
Let $d_n$ be a single edge connecting $u_{n,1}$ to $u_{n+1,1}$. For $i \leq i_n$,
 let $r_{n,i}$ be  edges from  $u_{n, 0}$ to $u_{n+1,0}$;  
for  $i>i_n$, 
there is no edge. 
$r_{n,i}$;
thus there are 
$i_n+1$ 
edges $r_{n,i}$.

Let 
$s_{n, i, j}$ be edges connecting $u_{n,1}$ to $u_{n+1,0}$ 
for $0 \leq j < A_{n,i}$, 
and $i<i_n$. 
Let $s_{n, i_n, j}$ be edges connecting $u_{n,1}$ to $u_{n+1,0}$ 
for $0 \leq j < \overline{H}_n$.

Order the edges entering $u_{n+1, 0}$ in the following manner (with  obvious modifications if $i_n$ is 
the last element of $U_n$): 
\begin{itemize}
\item $r_{n, i} \prec s_{n, i, 0} \prec s_{n, i, 1} \prec 
\ldots \prec s_{n, i, A_{n, i}-1}\prec r_{n,i+1}$  
for $i<i_n$; 
\item  $r_{n, i_n} \prec s_{n, i_n, 0} \prec s_{n, i_n, 1} \prec \ldots \prec
 s_{n, i_n, \overline{H}_n - 1}$.
\end{itemize}

This ordering on the Bratteli diagram defines an invertible transformation $S$  
on the set of all non-maximal paths $Y$. This new system is {rank one}, and we get its parameters 
by  replacing $Q_n$ by $Q'_n=i_n+1$, $A_{n,i}$ by $A'_{n,i}=A_{n,i}$
if $i<i_n$, $A'_{n,i_n}=\bar H_n$, the $A'_{n,i}$ being defined for $0\leq i <Q'_n$. The heights of the stacks remain the same, $H'_n=H_n$.
As in Section \ref{sad}, we define quantities $M'(x)$ and $J'_n(x)$.

We have 

\be\begin{aligned}
\sum_{i=0}^{Q'_n-1} A'_{n,i}&=\sum_{i=0}^{Q_n-1} A_{n,i}+
 (Q_n-i_n-1)H_n\\
 &\leq \sum_{i=0}^{Q_n-1}A_{n,i}+ \bar A_n +H_n \leq 2\sum_{i=0}^{Q_n-1}A_{n,i}+H_n.
\end{aligned}
\en

 Since $\sum_{n=0}^{\infty} {H_n} / {H_{n+1}} < \infty$ by construction, condition (\ref{eq:sumcon2}) implies that 
  {
  	\be
  	\sum_{n=0}^{+\infty} \frac{\sum_{i=0}^{Q'_n-1}A'_{n,i}}{H'_{n+1}}<+\infty.
  	\en}
  Thus
there is a unique invariant nonatomic probability 
measure for the {rank one} transformation $S$ which assigns the same 
measure to all cylinder sets ending at a given vertex in the left column.  

$S$ is essentially $0$-expansive by Proposition \ref{pea}, as its last stacks of spacers are 
of height $\overline{H}_n$ while the others are of height at most $\overline{A}_n$
for all $n$. Now we  show that $S$ is isomorphic to $T$.  

\subsection{Definition of the isomorphism} 
\label{iso}

Let $E_n$
denote the set of $x \in X$  which are either in the $n$-spacers or in the top $\overline H_n$ levels of the $n+1$-tower; namely $E_n$ is the set of $x$ such that
\begin{itemize}
\item either $x(n)=e_{n,i,j}$ for some $i,j$;
\item or $x(n)=e_{n,i}$ for some $i>i_n$.
\end{itemize}

Let 
\be
E = \bigcap_{k=0}^{\infty} \bigcup_{n=k}^{\infty} E_n ; 
\en
\be
\overline E = \bigcup_{-\infty}^{+\infty}T^kE. 
\en
Now $E_n$ is made with exactly $\sum_{i=0}^{Q_n-1} A_{n,i}+(Q_n-i_n-1)H_n$ iterates of $C_{n+1}$, and we have
$(Q_n-i_n-2)H_n\leq \overline{A}_n$, thus 
\be \mu (E_n)\leq \frac{\sum_{i=0}^{Q_n-1} A_{n,i}+\overline{A}_n+H_n}{H_{n+1}} \en
Because of (\ref{eq:sumcon2}) and $\sum_{n=0}^{\infty} {H_n} / {H_{n+1}} < \infty$ we have $\sum_{n=0}^{\infty} \mu (E_n) < \infty$ and 
\be \mu (E) =\mu(\overline E)= 0.\en
 
If $x \notin \bigcup_{n=0}^{\infty} E_n$ (i.e., $x$ is in no ${E_n}$), 
define $N(x) = -1$; 
otherwise, for $x \notin E$, define 
\be
N(x) = \max{ \{ n : x \in E_n \} } . 
\en
Note that, by definition of the $E_n$, $N(x)\geq M(x)$ (of Section \ref{sad}). Hence $J_{N(x)+1}(x)$ exists and can be computed from $x$ by Lemma \ref{mad}.

Now we define our isomorphism $\phi : X \to Y$. 
For $x \in X \setminus E$,  define $y = \phi (x)$ as follows: if $N(x) = -1$, define $y(-1) = r_{-1,0}$ and 
\be
y(n) = r_{n,i} \mbox{ whenever } x(n) = e_{n,i} .
 \en
Otherwise, if $0 \leq N(x) =N< \infty$, define $y(-1) = r_{-1, 1}$ and  
\begin{eqnarray} \label{eq:iso}\qquad y_n=
\left\{\begin{array}{lll}
d_n  &&\mbox{ for }\ 0 < n < N \\ 
s_{N,i,j}&\mbox{ if } x(N)=e_{N,i,j}, i \leq i_N&\mbox{ for } n = N \\ 
s_{N,i_N,J_{N+1}(x)-H_{N+1}+\overline H_N}&\mbox{ if } x(N)=e_{N,i},i>i_N&\mbox{ for } n = N \\ 
s_{N,i_N,J_{N+1}(x)-H_{N+1}+\overline H_N}&\mbox{ if } x(N)=e_{N,i,j},i>i_N&\mbox{ for } n = N \\ 
r_{n,i} &\mbox{ if } x(n)=e_{n,i}& \mbox{ for }\ n > N. 
\end{array}\right.
\end{eqnarray}

\subsection{Proof of the isomorphism}

Let $N=N(x)$. From the definition of $\phi$, $\phi(x)$ is never in the $n$-spacers if $N=-1$, otherwise the last $n$ for which $\phi(x)$ is in the $n$-spacers is $N$. Thus
\be M'(\phi(x))=N(x). \en
We compute $J'_{N+1}(\phi (x))$: if $x(N)=e_{N,i,j}$, then, by definition of $\phi$, $J'_{N+1}(\phi (x))=J_N(x)$; in the other cases, $\phi(x)(N)=
s_{N,i_N,J_{N+1}(x)-H_{N+1}+\overline H_N}$, which implies that $\phi(x)$ is at distance $\overline H_N-1-(J_{N+1}(x)-H_{N+1}+\overline H_N)=
H_{N+1}-1-J_{N+1}(x)$  from the top of the $N+1$-tower, while, by definition of $J'_{N+1}$, $\phi(x)$ is at distance $H'_{N+1}-1-J'_{N+1}(\phi(x))$ from that top. 
So because $H'_{N+1}=H_{N+1}$ we get
\be J'_{N+1}(\phi(x))=J_{N+1}(x). \en
For $n>N+1$, neither $x$ nor $\phi(x)$ can be in the top $\overline H_{n-1}$ levels in their respective $n$-towers. Then, by the definition of $\phi(x)(n)$ for $n\geq N+1$ and Lemma \ref{mad}, we get
\be J'_n(\phi(x)) = J_n(x) \mbox{ for all } n>N(x)=M'(\phi(x)). \en

Now, $\phi(x_1)=\phi(x_2)$ implies $J'_n(\phi(x_1))=J'_n(\phi(x_2))$ for all $n$  larger than $M'(\phi(x_1))\vee M'(\phi(x_2))$. Thus $J_n(x_1)=J_n(x_2)$ for all $n$ large enough, and by the rank one property this implies $x_1=x_2$. Thus $\phi$ is injective on $X\setminus E$.\\

Let $E'_n$
denote the set of $y \in Y$  which are in the $n$-spacers, $E' =\bigcap_{k=0}^{\infty} \bigcup_{n=k}^{\infty} E'_n$. For $y$ in $Y\setminus E'$,
 $M'=M'(y)$ is finite. Thus $y(M')=s_{M',i,j}$, and we can compute $J'_{M'+1}(y)$ by Lemma \ref{mad}. Then $y=\phi(x)$ for the (unique by the injectivity proved above) $x$ such that $J_{M'+1}(x)=J'_{M'+1}(y)$, and $x(n)=e_{n;i}$ whenever $y(n)=r_{n,i}$ for all $n>M'$. This $x$ satisfies $N(x)=M'$ by construction. Thus 
 \be \phi(X\setminus E)=Y\setminus E'.\en
 
 Finally, suppose $x$ is in $\overline E$, so that both $x$ and $Tx$ are in $E$. Then, for $n>(N(x)\vee N(Tx))+1$, both $x$ and $\phi(x)$ are in the $n$-tower and not in the topmost level (indeed not in the topmost $\overline H_{n-1}$ levels). Thus $J_n(Tx)=J_n(x)+1$, $J'_n(S\phi(x))=J'_n(\phi(x))+1$, and 
 \be J'_n(S\phi(x))=J_n(Tx)=J'_n(\phi(Tx)),\en 
 for all $n$ large enough. Thus $\phi$ is equivariant on $X\setminus\overline E$, which completes the proof.

\subsection{Variant}
A variation on the same method would be to replace only one tower by spacers: for 
each $n$ we choose one $i_n$
and throughout 
 the above section we replace ``all $i>i_n$" by $i=i_n$. Then the 
 isomorphism works in the same way, but essential expansiveness is proved using the machinery of 
 Kalikow's result and condition (\ref{kalspa}), as the new system has strings of at least $H_n$ 
 spacers.

\section{Back to the odometer} 
 We can apply this construction to the dyadic odometer. We have $q_n=2$, $a_{n,i}=0$ for all $n$ and $i$, $h_n=2^n$, and choose $ m_n= {n(n+1)}/{2}$, 
 so that $H_{n+1}=2^{n+1}H_n$. 
 Then the odometer is built with $Q_n=2^{n+1}$ and $A_{n,i}=0$ for all $n$ and $i$. This implies $\overline{A}_n=0$, 
 and  $i_n=Q_n-2${.} We build the new system by replacing only the top sub-tower with spacers. 
 {Then the new system admits a constructive symbolic definition, 
 with recursion formula
$$B_{n+1}=B_n^{2^{n+1}-1}1^{2^{{n(n+1)}/{2}}},$$ 
	with corresponding constructive geometric {and} adic definitions.
	The following picture gives the first stages of the adic construction, corresponding to $B_1=B_01$ and $B_2=B_1B_1B_111$.}

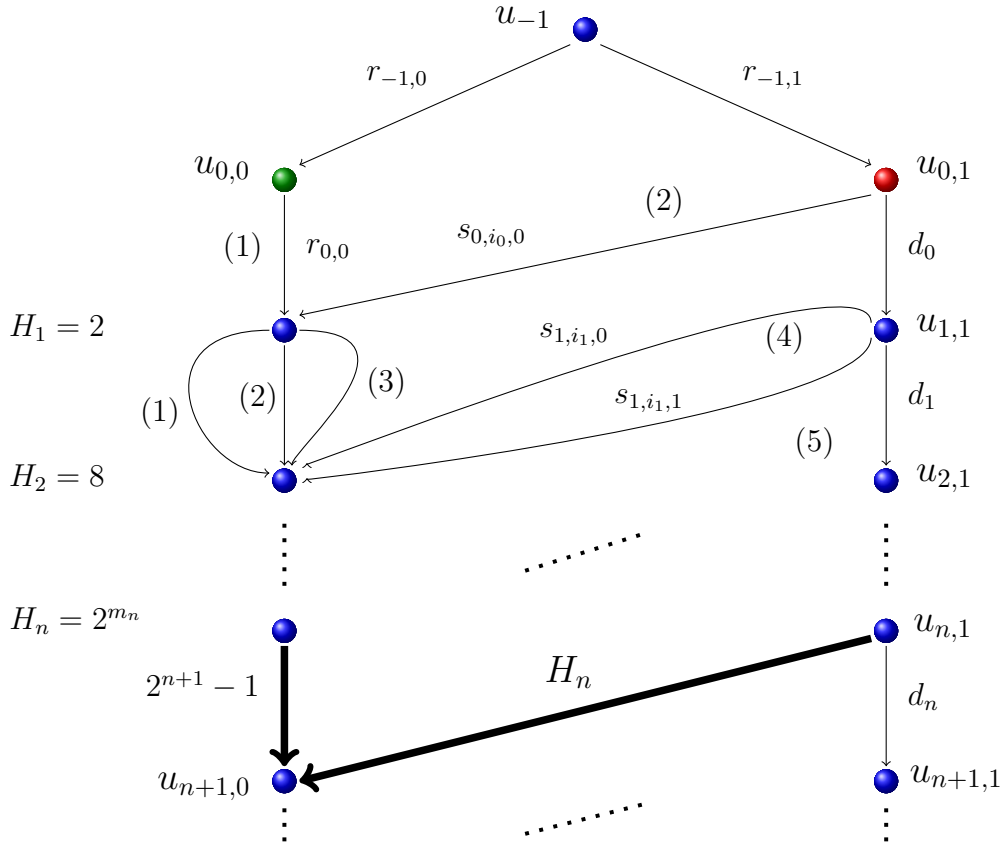
\begin{figure}[!h]
\begin{tikzpicture}

\shade [ball color=blue] (1+6,10) circle (.16) node[](residual0){}; 
\node[black,below left=-0.6cm and 0.0cm of residual0]{ {\large $u_{-1}$ } };

\draw[<-] (1+2.2,8.2)--(1+5.8,9.8) node[blue](myarrow01){};
\node[black,below left=0.0cm and 1.6cm of myarrow01]{ $r_{-1,0}$ }; 

\draw[<-] (1+9.8,8.2)--(1+6.2,9.8) node[blue](myarrow1){};
\node[black,below right=0.0cm and 1.6cm of myarrow1]{ $r_{-1,1}$ }; 

\shade [ball color=mycolor] (1+2, 8) circle (.16) node[](residual1){};
\node[black,below left=-0.6cm and 0.0cm of residual1]{ {\large $u_{0, 0}$ } };

\shade [ball color=red] (1+10,8) circle (.16) node[](residual9){};
\node[black,below left=-0.6cm and -1.6cm of residual9]{ {\large $u_{0, 1}$ } };
\draw[<-] (1+10.0,6.2)--(1+10.0,7.8) node[blue](myarrow5){};
\node[black,below right=0.2cm and 0.0cm of myarrow5]{ $d_0$ }; 

\draw[<-] (1+2.2,6.2)--(1+9.8,7.8) node[blue](myarrow100){};
\node[black,below left=0.1cm and 4.3cm of myarrow100]{ $s_{0,i_0,0}$ }; 
\node[black,above left=-0.6cm and 2.2cm of myarrow100]{ (2) }; 

\draw[<-] (1+2.0,6.2)--(1+2.0,7.8) node[blue](myarrow102){};
\node[black,below left=0.2cm and .0cm of myarrow102]{(1)};
\node[black,below right=0.3cm and 0.0cm of myarrow102]{ $r_{0,0}$ }; 

\shade [ball color=blue] (1+2,6.0) circle (.16) node[](residual2){};
\node[black,below left=-0.5cm and 2.1cm of residual2]{ $H_1=2$ }; 
\draw[<-] (1+2.0,4.2)--(1+2.0,5.8) node[blue](myarrow103){};
\draw[<-] (1+1.8,4.1) .. controls +(left:1.0cm) and +(left:1.8cm) .. (1+1.8,6.0) node[](myarrow104){};
\draw[<-] (1+2.1,4.2) .. controls +(north:0.2cm) and +(right:1.8cm) .. (1+2.2,6.0) node[](myarrow105){};
\node[black,below left=0.2cm and -0.2cm of myarrow103]{(2)};
\node[black,below left=0.6cm and 0.9cm of myarrow104]{(1)};
\node[black,below right=0.2cm and 0.6cm of myarrow105]{(3)};
\shade [ball color=blue] (1+2,4.0) circle (.16) node[](residual3){};
\node[black,below left=-0.5cm and 2.1cm of residual3]{ $H_2=8$ }; 

\shade [ball color=blue] (1+2,2.0) circle (.16) node[](residual4){};
\node[black,below left=-0.6cm and 1.6cm of residual4]{ $H_n=2^{m_n}$ }; 
\shade [ball color=blue] (1+2,0.0) circle (.16) node[](residual5){};
\node[black,below left=-0.4cm and 0.0cm of residual5]{ {\large $u_{n+1, 0}$ } };

\shade [ball color=blue] (1+10,6.0) circle (.16) node[](residual10){}; 
\node[black,below left=-0.5cm and -1.6cm of residual10]{ {\large $u_{1, 1}$ } };
\draw[<-] (1+10.0,4.2)--(1+10.0,5.8) node[blue](myarrow6){}; 
\node[black,below right=0.2cm and 0.0cm of myarrow6]{ $d_1$ }; 

\draw[<-] (1+2.3,4.2).. controls +(north:0.0cm) and +(north:1.0cm) .. (1+9.8,6.1) node[blue](myarrow101){};
\node[black,below left=-0.3cm and 0.6cm of myarrow101]{(4)}; 
\node[black,above left=-0.6cm and 3.2cm of myarrow101]{ $s_{1,i_1,0}$ }; 

\draw[<-] (1+2.3,4.0).. controls +(south:0.0cm) and +(south:1.0cm) .. (1+9.8,5.9) node[blue](myarrow110){};
\node[black,below left=0.9cm and 0.2cm of myarrow110]{(5)};
\node[black,below left=0.4cm and 2.2cm of myarrow110]{ $s_{1,i_1,1}$ }; 

\shade [ball color=blue] (1+10,4.0) circle (.16) node[](residual11){};
\node[black,below left=-0.5cm and -1.6cm of residual11]{ {\large $u_{2, 1}$ } };
\shade [ball color=blue] (1+10,2.0) circle (.16) node[](residual12){};
\node[black,below left=-0.5cm and -1.6cm of residual12]{ {\large $u_{n, 1}$ } };
\draw[<-] (1+10.0,0.2)--(1+10.0,1.8) node[blue](myarrow3){};
\node[black,below right=0.2cm and 0.0cm of myarrow3]{ $d_n$ }; 
\shade [ball color=blue] (1+10,0.0) circle (.16) node[](residual6){}; 
\node[black,below left=-0.5cm and -2.0cm of residual6]{ {\large $u_{n+1, 1}$ } };

\draw[<-, line width=1mm] (1+2.2,0.0) -- (1+9.8,1.9) node[blue](myarrow111){};
\node[black,below left=-0.1cm and 3.2cm of myarrow111]{ {\large $H_{n}$ } };

\draw[<-, line width=1mm] (1+2.0,0.2)--(1+2.0,1.8) node[blue](myarrow108){};
\node[black,below left=-0.0cm and .0cm of myarrow108]{ $2^{n+1} - 1$ };

{[line width=1.5pt]
{[black]
\draw[loosely dotted] (1+2, 2.6) -- (1+2, 3.5);
\draw[loosely dotted] (1+10, 2.6) -- (1+10, 3.5);
\draw[loosely dotted] (1+5.2, 2.8) -- (1+6.8, 3.3);

\draw[loosely dotted] (1+2,-.8) -- (1+2,-.3);
\draw[loosely dotted] (1+5.2,-.7) -- (1+6.8,-.3);
\draw[loosely dotted] (1+10,-.8) -- (1+10,-.3);
}}

\end{tikzpicture}
\caption{Constructive odometer}
\end{figure}

\vskip .2in

\begin{bibdiv}
	\begin{biblist}
		\bibselect{biblioafp}
	\end{biblist}
\end{bibdiv}

\end{document}